\theoremstyle{plain}
\newtheorem{theorem}{Theorem}[section]
\newtheorem{cor}[theorem]{Corollary}
\theoremstyle{definition}
\theoremstyle{remark}
\newtheorem{remark}{Remark}[section]
\def\0{{\bf 0}}
\def\1{{\bf 1}}
\def \bmat{\left[\begin{matrix}}
	\def \emat{\end{matrix}\right]}
\def \bvec{\left(\begin{matrix}}
	\def \evec{\end{matrix}\right)}
\def \xy1vec{\left[\begin{matrix}x\\y\\1\end{matrix}\right]}
\def \QED{\begin{flushright}\Halmos\end{flushright}\end{proof}}
\def \defeq{\mathrel{\mathop{:}}=}
\def \xbar{\bar{x}}
\def \grad{\triangledown}
\def \gp3d{\grad p_3(d)}
\def \Hess{\triangledown^2}
\def \R{\mathbb{R}}
\def \Rn{\R^n}
\def \beq{\begin{equation}}
\def \eeq{\end{equation}}
\def \baeq{\begin{equation*}\begin{aligned}}
	\def \eaeq{\end{aligned}\end{equation*}}
\newcommand{\baeql}[1]{\begin{equation}\label{#1}\begin{aligned}}
	\def \eaeql{\end{aligned}\end{equation}}
\def \otm{\{1, \ldots, m\}}
\newcommand{\ceil}[1]{\lceil #1 \rceil}
\title{\LARGE \bf On the Complexity of Finding a Local Minimizer\\ of a Quadratic Function over a Polytope}
\author{Amir Ali Ahmadi\thanks{Amir Ali Ahmadi is with the department of Operations Research and Financial Engineering at Princeton University. Email: \texttt{aaa@princeton.edu}.} and Jeffrey Zhang\thanks{Jeffrey Zhang is with the department of Mathematical Sciences at Carnegie Mellon University. Email: \texttt{jeffz@cmu.edu}.} \thanks{This work was partially supported by an AFOSR MURI award, the DARPA Young Faculty Award, the Princeton SEAS Innovation Award, the NSF CAREER Award, the Google Faculty Award, and the Sloan Fellowship.}}
\begin{document}
\date{}
\maketitle
\begin{abstract}
	\noindent
	We show that unless P=NP, there cannot be a polynomial-time algorithm that finds a point within Euclidean distance $c^n$ (for any constant $c \ge 0$) of a local minimizer of an $n$-variate quadratic function over a polytope. This result (even with $c=0$) answers a question of Pardalos and Vavasis that appeared in 1992 on a list of seven open problems in complexity theory for numerical optimization. Our proof technique also implies that the problem of deciding whether a quadratic function has a local minimizer over an (unbounded) polyhedron, and that of deciding if a quartic polynomial has a local minimizer are NP-hard.
\end{abstract}

\paragraph{Keywords:} {\small Local minimizers, quadratic programs, computational complexity, polynomial optimization.}

\section{Introduction}\label{Sec: Introduction}

Recall that a \emph{local minimizer} of a function $f: \Rn \to \R$ over a set $\Omega \subseteq \Rn$ is a point $\xbar \in \Omega$ for which there exists a scalar $\epsilon > 0$ such that $f(\xbar) \le f(x)$ for all $x \in \Omega$ with $\|x - \xbar\| \le \epsilon$. In the case where $f$ is a linear function and $\Omega$ is polyhedral (i.e., the case of linear programming), it is well known that a local minimizer (which also has to be a global minimizer) can be found in polynomial time in the Turing model of computation~\cite{khachiyan1979polynomial,karmarkar1984new}. Perhaps the next simplest constrained optimization problem to consider is one where $f$ is a quadratic function and $\Omega$ is polyhedral. Such an optimization problem is known as a \emph{quadratic program} and can be written as
\beq\label{Eq: Polynomial Optimization Problem}
\begin{aligned}
	& \underset{x \in \Rn}{\min}
	& & x^TQx + c^Tx \\
	& \text{subject to}
	&& a_i^Tx \le b_i, \forall i \in \otm,\\
\end{aligned}
\eeq
where $Q \in \R^{n \times n}$, $c, a_1, \ldots, a_m \in \Rn$, and $b_1, \ldots, b_m~\in~\R$. The matrix $Q$ is taken without loss of generality to be symmetric. When complexity questions about quadratic programs are studied in the Turing model of computation, all these data are rational and the input size is the total number of bits required to write them down. When the matrix $Q$ is positive semidefinite, then we recover the case of convex quadratic programming, where it is well known that finding a global minimizer can be done in polynomial time~\cite{kozlov1980polynomial}. However, when $Q$ has even a single negative eigenvalue, finding a global minimizer is NP-hard~\cite{pardalos1991quadratic}. It is therefore natural to ask whether one can instead find a local minimizer of a general quadratic program efficiently. In fact, this precise question appeared in 1992 on a list of seven open problems in complexity theory for numerical optimization \cite{pardalos1992open}:
\begin{quote}
	\emph{``What is the complexity of finding even a local minimizer for nonconvex quadratic programming, assuming the feasible set is compact? Murty and Kabadi (1987, \cite{murty1987some}) and Pardalos and Schnitger (1988, \cite{pardalos1988checking}) have shown that it is NP-hard to test whether a given point for such a problem is a local minimizer, but that does not rule out the possibility that another point can be found that is easily verified as a local minimizer.''}
\end{quote}

A few remarks on the phrasing of this problem are in order. First, note that in this question, the feasible set of the quadratic program is assumed to be compact (i.e., a polytope). Therefore, there is no need to focus on the related and often prerequisite problem of deciding the \emph{existence} of a local minimizer (since any global minimizer e.g. is a local minimizer). The latter question makes sense in the case where the feasible set of the quadratic program is unbounded; the complexity of this question is also addressed in our paper (Theorem~\ref{Thm: LM QP NP hard}). Second, as the quote points out, the question of finding a local minimizer is also separate from a complexity viewpoint from that of testing if a given point is a local minimizer. This related question has been studied more extensively and its complexity has already been settled for optimization problems whose objective and constraints are given by polynomial functions of any degree; see~\cite{murty1987some,pardalos1988checking,cubicpaper}.

To point out some of the subtle differences between these variations of the problem more specifically, we briefly review the reduction of Murty and Kabadi \cite{murty1987some}, which shows the NP-hardness of deciding if a given point is a local minimizer of a quadratic program. In~\cite{murty1987some}, the authors show that the problem of deciding if a symmetric matrix $Q$ is \emph{copositive}---i.e. whether $x^TQx \ge 0$ for all vectors $x$ in the nonnegative orthant---is NP-hard. From this, it is straightforward to observe that the problem of testing whether a given point is a local minimizer of a quadratic function over a polyhedron is NP-hard: Indeed, the origin is a local minimizer of $x^TQx$ over the nonnegative orthant if and only if the matrix $Q$ is copositive. However, it is not true that $x^TQx$ has a local minimizer over the nonnegative orthant if and only if $Q$ is copositive. Although the ``if'' direction holds, the ``only if'' direction does not. For example, consider the matrix
$$Q = \bmat 0 & 1 \\ 1 & -2\emat,$$ which is clearly not copositive, even though the point $(1,0)^T$ is a local minimizer of $x^TQx$ over the nonnegative orthant.

Our main results in this paper are as follows. We show that unless P=NP, no polynomial-time algorithm can find a point within Euclidean distance $c^n$ (for any constant $c \ge 0$) of a local minimizer of an $n$-variate quadratic program with a compact feasible set (Theorem~\ref{Thm: Finding LM NP-hard}). See also Corollaries~\ref{Cor: QP Pseudo} and \ref{Cor: 2n NP-hard}. To prove this, we show as an intermediate step that deciding whether a quartic polynomial or a quadratic program has a local minimizer is strongly NP-hard\footnote{This implies that these
	problems remain NP-hard even if the bitsize of all numerical data are $O(\log n)$, where $n$ is the number of variables. For a
	strongly NP-hard problem, even a pseudo-polynomial time algorithm---i.e., an algorithm whose running time is polynomial in the magnitude of the numerical data of the problem but not necessarily in their bitsize---cannot exist unless P=NP.  See \cite{garey2002computers} or \cite[Section 2]{ahmadi2019complexity} for more details.} (Theorems~\ref{Thm: LM Degree 4 NP hard} and \ref{Thm: LM QP NP hard}). Finally, we show that unless P=NP, there cannot be a polynomial-time algorithm that decides if a quadratic program with a compact feasible set has a unique local minimizer and if so returns this minimizer (Theorem~\ref{Thm: UniqueQP NP Hard}).

Overall, our results suggest that without additional problem structure, questions related to finding local minimizers of quadratic programs are not easier (at least from a complexity viewpoint) than those related to finding global minimizers. It also suggests that any efficient heuristic that aims to find a local minimizer of a quadratic program must necessarily fail on a ``significant portion'' of instances; see e.g. Corollary 2.2 of \cite{hemaspaandra2012sigact} for a more formal complexity theoretic statement.

\subsection{Notation and Basic Definitions}\label{SSec: Notation}

For a vector $x \in \Rn$, the notation $x^2$ denotes the vector in $\Rn$ whose $i$-th entry is $x_i^2$, and $Diag(x)$ denotes the diagonal $n \times n$ matrix whose $i$-th diagonal entry is $x_i$. The notation $x \ge 0$ denotes that the vector $x$ belongs to the nonnegative orthant, and for such a vector, $\sqrt{x}$ denotes the vector in $\Rn$ whose $i$-th entry is $\sqrt{x_i}$. For two matrices $X, Y \in \R^{m \times n}$, we denote by $X \odot Y$ the matrix in $\R^{m \times n}$ whose $(i,j)$-th entry is $X_{ij}Y_{ij}$. For vectors $x, y \in \Rn$, the notation $y_x$ (sometimes $(y)_x$ if there is room for confusion with other indices) denotes the vector containing the entries of $y$ where $x_i$ is nonzero in the same order (the length of $y_x$ is hence equal to the number of nonzero entries in $x$). Similarly, for a vector $x \in \Rn$ and a matrix $Y \in \R^{n \times n}$, the notation $Y_x$ (sometimes $(Y)_x$ if there is room for confusion with other indices) denotes the principal submatrix of $Y$ consisting of rows and columns of $Y$ whose indices correspond to indices of nonzero entries of $x$. The notation $I$ (resp. $J$) refers to the identity matrix (resp. the matrix of all ones); the dimension will be clear from context. For a symmetric matrix $M \in \R^{n \times n}$, the notation $M \succeq 0$ (resp. $M \succ 0$) denotes that $M$ is \emph{positive semidefinite} (resp. \emph{positive definite}), i.e. that it has nonnegative (resp. positive) eigenvalues. As mentioned already, we say that $M$ is \emph{copositive} if $x^TMx \ge 0, \forall x \ge 0$. The \emph{simplex} in $\Rn$ is denoted by $\Delta_n \defeq \{x \in \Rn\ |\ x \ge 0, \sum_{i=1}^n x_i = 1\}$. Finally, for a scalar $c$, the notation $\ceil{c}$ denotes the ceiling of $c$, i.e. the smallest integer greater than or equal to $c$.

We recall that a \emph{form} is a homogeneous polynomial; i.e. a polynomial whose monomials all have the same degree. A form $p: \Rn \to \R$ is said to be \emph{nonnegative} if $p(x) \ge 0, \forall x \in \Rn$, and \emph{positive definite} if $p(x) > 0, \forall x \ne 0$. A \emph{critical point} of a differentiable function $f:\Rn \to \R$ is a point $x\in\Rn$ at which the gradient $\grad  f(x)$ is zero. A \emph{second-order point} of a twice-differentiable function $f:\Rn \to \R$ is a critical point $x$ at which the Hessian matrix $\Hess f(x)$ is positive semidefinite.

All graphs in this paper are undirected, unweighted, and have no self-loops. The \emph{adjacency matrix} of a graph $G$ on $n$ vertices is the $n \times n$ symmetric matrix whose $(i,j)$-th entry equals one if vertices $i$ and $j$ share an edge in $G$ and zero otherwise. The \emph{complement} of a graph $G$, denoted by $\bar{G}$, is the graph with the same vertex set as $G$ and such that two distinct vertices are adjacent if and only if they are not adjacent in $G$. An \emph{induced subgraph} of $G$ is a graph containing a subset of the vertices of $G$ and all edges connecting pairs of vertices in that subset.

\section{The Main Result}\label{Sec: Main Result}

\subsection{Complexity of Deciding Existence of Local Minimizers}

To show that a polynomial-time algorithm for finding a local minimizer of a quadratic function over a polytope (i.e., a bounded polyhedron) implies P = NP (Theorem~\ref{Thm: Finding LM NP-hard}), we first show that it is NP-hard to decide whether a quartic polynomial has a local minimizer (Theorem~\ref{Thm: LM Degree 4 NP hard}). The main idea of this proof is used to show the same claim about deciding whether a quadratic program has a local minimizer (Theorem~\ref{Thm: LM QP NP hard}) and to then establish our main result (Theorem~\ref{Thm: Finding LM NP-hard}).

\begin{theorem}\label{Thm: LM Degree 4 NP hard}
	It is strongly NP-hard to decide if a degree-4 polynomial has a local minimizer.
\end{theorem}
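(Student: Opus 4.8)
The plan is to reduce from a known strongly NP-hard problem — the most natural candidate is deciding whether a symmetric matrix $Q$ is \emph{copositive}, which Murty and Kabadi showed to be (strongly) NP-hard, or equivalently (via the same graph-theoretic reductions) the problem associated with clique number / stable set. The key conceptual point, emphasized in the introduction via the $2\times 2$ example, is that the origin being a local minimizer of $x^TQx$ over the nonnegative orthant (equivalently $Q$ copositive) is \emph{not} the same as $x^TQx$ having \emph{some} local minimizer over the orthant. So the goal is to build, from $Q$, a quartic polynomial $p$ on some number of variables such that $p$ has a local minimizer if and only if $Q$ is copositive, with the ``only if'' direction being the delicate one.

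First, I would try to encode the nonnegative-orthant constraint into an unconstrained quartic by the standard substitution $x_i \mapsto x_i^2$ (or working with a vector $y$ and the form $q(y) := (y^2)^T Q (y^2)$, which is a quartic form in $y$). Then $q$ is nonnegative on all of $\Rn$ iff $Q$ is copositive, and when $Q$ is copositive the origin is a global (hence local) minimizer. The problem is the converse: if $Q$ is \emph{not} copositive, $q$ is a quartic form that is negative somewhere, and being homogeneous of even degree it then has no local minimizer (it decreases to $-\infty$ along a ray, and $0$ is a saddle). That gives a clean equivalence for quartic \emph{forms}: $q$ has a local minimizer $\iff$ $Q$ copositive. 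But one must double-check the case where $Q$ is copositive but $q$ has a nontrivial zero manifold — $0$ is still a local minimizer, so that is fine. The reduction is polynomial (indeed preserves the $O(\log n)$ bitsize, giving strong NP-hardness) since we only square variables.

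The step I expect to be the main obstacle is making sure the ``has a local minimizer'' property is genuinely captured, i.e. handling degenerate directions and ruling out spurious local minimizers away from the origin. Concretely: (i) when $Q$ is not copositive I need that there is truly \emph{no} local minimizer anywhere, not just that the origin fails — here homogeneity of the quartic form is the right tool, since for a form $q$ with $q(v)<0$ one has $q(tv) \to -\infty$, so no point can be a local min; (ii) I should confirm that a quartic form which is copositive-derived but merely positive semidefinite (nonnegative, with zeros) still has the origin as a bona fide local minimizer, which is immediate since $q \ge 0 = q(0)$ globally. If instead one prefers to reduce from clique number to get a cleaner ``strongly NP-hard'' bookkeeping, one can use the Motzkin–Straus formulation: $\max_{x \in \Delta_n} x^T A_G x = 1 - 1/\omega(G)$, and build $Q$ so that copositivity of $Q$ (or a shifted version) encodes an inequality $\omega(G) \le k$; this is exactly the route Murty–Kabadi-style reductions take and it keeps all coefficients small.

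Finally I would assemble the argument: given $Q$ rational symmetric with $O(\log n)$-bit entries, form the quartic form $q(y) = \sum_{i,j} Q_{ij} y_i^2 y_j^2$; then $q$ has a local minimizer iff $Q$ is copositive; since copositivity is strongly NP-hard to decide, so is deciding whether a degree-$4$ polynomial has a local minimizer. I'd remark that the construction actually produces a \emph{form}, which is a slightly stronger statement, and that this same squaring-plus-homogeneity idea is what gets adapted in Theorem~\ref{Thm: LM QP NP hard} to the quadratic-program setting (where the orthant constraint stays explicit and one must instead work harder to prevent local minimizers at the ``wrong'' faces of the feasible polyhedron).
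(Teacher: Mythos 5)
There is a genuine gap, and it is precisely the subtlety you flagged as ``the main obstacle'' and then dismissed too quickly. Your claimed equivalence --- $q(y) \defeq (y^2)^TQ(y^2)$ has a local minimizer if and only if $Q$ is copositive --- is false in the ``only if'' direction. Your argument is that if $Q$ is not copositive then $q$ takes a negative value at some $v$, hence $q(tv)\to-\infty$, hence ``no point can be a local min.'' But unboundedness below does not preclude local minimizers away from the origin. Homogeneity only rules out the \emph{origin} (negative values occur arbitrarily close to $0$ by scaling $v$) and rules out \emph{global} minimizers; a point $\bar y\neq 0$ off the ray through $v$ can still be a local minimizer. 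The paper's own $2\times 2$ example, which you cite in your opening paragraph, is a counterexample to your reduction: for $Q=\bmat 0&1\\1&-2\emat$, which is not copositive, the quartic form is $q(y)=2y_1^2y_2^2-2y_2^4$, and the point $(1,0)^T$ is a local minimizer, since near it $q(y)=2y_2^2(y_1^2-y_2^2)\ge 0=q(1,0)$. So reducing from general copositivity via squaring alone does not work, for exactly the same reason the Murty--Kabadi reduction does not directly settle the existence question.

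This is why the paper does not reduce from arbitrary copositive matrices but from STABLESET, using the specific family $M_{A,k}=kA+kI-J$ with $k$ \emph{noninteger}. The structure is used essentially to kill nonzero local minimizers: if $\xbar\neq 0$ were a second-order point of $p_{A,k}$, then $\grad p_{A,k}(\xbar)=0$ forces $(M_{A,k}\xbar^2)_{\xbar}=0$ and $\Hess p_{A,k}(\xbar)\succeq 0$ forces $(M_{A,k})_{\xbar}\succeq 0$; via Motzkin--Straus the first fact gives $\alpha(G_{\xbar})\ge k$ and the second gives $\alpha(G_{\xbar})\le k$, so $\alpha(G_{\xbar})=k$, impossible for noninteger $k$. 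Your proposal contains the right ingredients (squaring, Motzkin--Straus, strong NP-hardness bookkeeping) but is missing this second-order analysis, which is the actual content of the theorem; without it the reduction is incorrect as stated.
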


We will prove this theorem by presenting a polynomial-time reduction from the STABLESET problem, which is known to be (strongly) NP-hard~\cite{garey2002computers}. Recall that in the STABLESET problem, we are given as input a graph $G$ on $n$ vertices and a positive integer $r \le n$. We are then asked to decide whether $G$ has a \emph{stable set} of size $r$, i.e. a set of $r$ pairwise non-adjacent vertices. We denote the size of the largest stable set in a graph $G$ by the standard notation $\alpha(G)$. We also recall that a \emph{clique} in a graph $G$ is a set of pairwise adjacent vertices. The size of the largest clique in $G$ is denoted by $\omega(G)$. The following theorem of Motzkin and Straus~\cite{motzkin1965maxima} relates $\omega(G)$ to the optimal value of a quadratic program.

\begin{theorem}[\cite{motzkin1965maxima}]\label{Thm: Motzkin Straus}
	Let $G$ be a graph on $n$ vertices with adjacency matrix $A$ and clique number $\omega$. The optimal value of the quadratic program
	
	\begin{equation}\label{Eq: MS QP}
		\begin{aligned}
			& \underset{x \in \Rn}{\max}
			& & x^TAx \\
			& \text{\emph{subject to}}
			&& x \ge 0,\\
			&&& \sum_{i=1}^n x_i = 1
		\end{aligned}
	\end{equation}
	is $1 - \frac{1}{\omega}$.
\end{theorem}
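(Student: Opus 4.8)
The plan is to establish the two bounds ``optimal value $\ge 1-\tfrac1\omega$'' and ``optimal value $\le 1-\tfrac1\omega$'' separately.

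For the lower bound I would exhibit an explicit feasible point. Let $C$ be a clique of $G$ with $|C| = \omega$, and define $x \in \Rn$ by $x_i = \tfrac1\omega$ for $i \in C$ and $x_i = 0$ otherwise. Then $x \in \Delta_n$, and since $A_{ij} = 1$ for all distinct $i,j \in C$, we have $x^TAx = \sum_{i \ne j,\ i,j\in C} \tfrac1{\omega^2} = \tfrac{\omega(\omega-1)}{\omega^2} = 1 - \tfrac1\omega$, so the optimal value is at least $1 - \tfrac1\omega$.

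For the upper bound, the key idea is to pick an optimal solution $x^\star$ of~\eqref{Eq: MS QP} having the fewest nonzero entries, and to show its support must induce a clique of $G$. Suppose not: let $i \ne j$ be two vertices in the support of $x^\star$ that are non-adjacent in $G$, and let $e_i, e_j$ denote the corresponding standard basis vectors. Consider the segment $x^\star + t(e_i - e_j)$ for $t \in [-x_i^\star, x_j^\star]$, which stays in $\Delta_n$. Because $G$ has no self-loops ($A_{ii} = A_{jj} = 0$) and $i,j$ are non-adjacent ($A_{ij} = A_{ji} = 0$), the second-order term $(e_i-e_j)^T A (e_i-e_j)$ vanishes, so $t \mapsto (x^\star + t(e_i-e_j))^T A (x^\star + t(e_i-e_j))$ is affine in $t$. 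By optimality of $x^\star$ this affine function cannot strictly increase toward either endpoint, hence it is constant on the segment; evaluating at an endpoint yields an optimal solution in which either coordinate $i$ or coordinate $j$ is zero, contradicting minimality of the support. Therefore the support $C$ of $x^\star$ is a clique, say $|C| = k \le \omega$, and $\sum_{i \in C} x_i^\star = 1$. Then $x^{\star T} A x^\star = \big(\sum_{i \in C} x_i^\star\big)^2 - \sum_{i\in C}(x_i^\star)^2 = 1 - \sum_{i\in C}(x_i^\star)^2$, and Cauchy--Schwarz gives $\sum_{i\in C}(x_i^\star)^2 \ge \tfrac1k \ge \tfrac1\omega$, so $x^{\star T} A x^\star \le 1 - \tfrac1\omega$.

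The only subtle point — and it is minor — is the support-reduction step: one must observe that moving to an endpoint of the segment cannot increase the objective (immediate from optimality of $x^\star$), so the affine map is in fact constant along the segment and the reduced-support point is still optimal. Combining the two bounds gives that the optimal value equals $1 - \tfrac1\omega$.
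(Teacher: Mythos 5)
Your proof is correct. The paper does not prove this statement at all---it is the classical Motzkin--Straus theorem, imported by citation---so there is no in-paper argument to compare against. What you give is the standard short proof: the explicit uniform-weight point on a maximum clique for the lower bound, and for the upper bound the support-shifting argument (the objective is affine along the direction $e_i - e_j$ for non-adjacent $i,j$ in the support, so a minimal-support optimal solution must be supported on a clique), finished off with Cauchy--Schwarz. All steps check out, including the observation that an affine function maximized at an interior point of a segment must be constant on it, which justifies passing to an endpoint without losing optimality.
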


For a scalar $k$ and a symmetric matrix $A$ (which will always be an adjacency matrix), the following notation will be used repeatedly in our proofs:
\begin{equation}\label{Def: MAk} M_{A,k} \defeq kA + kI - J,\end{equation}
\begin{equation}\label{Def: qAk}q_{A,k}(x) \defeq x^TM_{A,k}x,\end{equation}
and
\begin{equation}\label{Def: pAk}p_{A,k}(x) \defeq (x^2)^T M_{A,k} x^2.\end{equation}
Note that nonnegativity of the quadratic form $q_{A,k}$ over the nonnegative orthant is equivalent to (global) nonnegativity of the quartic form $p_{A,k}$ and to copositivity of the matrix $M_{A,k}$.

The following corollary of Theorem~\ref{Thm: Motzkin Straus} will be of more direct relevance to our proofs. The first statement in the corollary has been observed e.g. by de Klerk and Pashechnik~\cite{de2002approximation}, but its proof is included here for completeness. The second statement, which will also be needed in the proof of Theorem~\ref{Thm: LM Degree 4 NP hard}, follows straightforwardly.

\begin{cor}\label{Cor: Motzkin Straus 2}
	For a scalar $k > 0$ and a graph $G$ with adjacency matrix $A$, the matrix $M_{A,k}$ in (\ref{Def: MAk}) is copositive if and only if $\alpha(G) \le k$. Furthermore, if $\alpha(G) < k$, the quartic form $p_{A,k}$ in (\ref{Def: pAk}) is positive definite.\footnote{The converse of this statement also holds, but we do not need it for the proof of Theorem~\ref{Thm: LM Degree 4 NP hard}.}
\end{cor}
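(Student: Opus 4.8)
The plan is to deduce both statements from the Motzkin--Straus theorem (Theorem~\ref{Thm: Motzkin Straus}) applied to the \emph{complement} graph $\bar G$, exploiting the elementary facts that $\alpha(G) = \omega(\bar G)$ and that the adjacency matrix of $\bar G$ equals $J - I - A$.

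First I would reduce copositivity of $M_{A,k}$ to a statement over the simplex $\Delta_n$. Since $q_{A,k}$ is a quadratic form, any nonzero $x \ge 0$ can be written as $x = \lambda y$ with $\lambda > 0$ and $y \in \Delta_n$, and $q_{A,k}(x) = \lambda^2 q_{A,k}(y)$; hence (the case $x = 0$ being trivial) $M_{A,k}$ is copositive if and only if $q_{A,k}(y) \ge 0$ for all $y \in \Delta_n$. The key computation is that on $\Delta_n$ one has $y^T J y = (\sum_i y_i)^2 = 1$, so, substituting $A = J - I - \bar A$ with $\bar A$ the adjacency matrix of $\bar G$,
\[
q_{A,k}(y) \;=\; k\, y^T A y + k\, \|y\|^2 - 1 \;=\; k\bigl(1 - \|y\|^2 - y^T \bar A y\bigr) + k\|y\|^2 - 1 \;=\; k\bigl(1 - y^T \bar A y\bigr) - 1.
\]
Thus $M_{A,k}$ is copositive if and only if $\max_{y \in \Delta_n} y^T \bar A y \le 1 - \tfrac1k$, and by Theorem~\ref{Thm: Motzkin Straus} applied to $\bar G$ this maximum equals $1 - \tfrac{1}{\omega(\bar G)} = 1 - \tfrac{1}{\alpha(G)}$. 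Since $1 - \tfrac{1}{\alpha(G)} \le 1 - \tfrac1k$ is equivalent to $\alpha(G) \le k$, the first claim follows.

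For the second claim, note that $p_{A,k}(x) = q_{A,k}(x^2)$ and that the map $x \mapsto x^2$ sends every nonzero $x \in \Rn$ to a nonzero nonnegative vector (and every nonnegative vector is of this form); so positive definiteness of $p_{A,k}$ is equivalent to $q_{A,k}(y) > 0$ for all nonzero $y \ge 0$, which by the same homogeneity argument is equivalent to $q_{A,k}(y) > 0$ for all $y \in \Delta_n$. Plugging the Motzkin--Straus bound into the displayed identity gives $q_{A,k}(y) \ge k\bigl(1 - (1 - \tfrac{1}{\alpha(G)})\bigr) - 1 = \tfrac{k}{\alpha(G)} - 1$ for every $y \in \Delta_n$, and (using $\alpha(G) \ge 1$) this is strictly positive precisely when $\alpha(G) < k$.

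I do not expect a genuine obstacle here; the only points that need care are the homogeneity reduction from the nonnegative orthant to $\Delta_n$ (together with the separate trivial treatment of $x = 0$), the correct bookkeeping of the identities $A = J - I - \bar A$ and $y^T J y = 1$ on the simplex, and remembering to invoke Motzkin--Straus for the complement $\bar G$ rather than for $G$ itself.
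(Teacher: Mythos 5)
Your proof is correct and follows essentially the same route as the paper: both claims are reduced to the simplex by homogeneity and then settled by applying Motzkin--Straus to the complement graph $\bar G$ via $\alpha(G)=\omega(\bar G)$ and $y^TJy=1$ on $\Delta_n$. The only cosmetic difference is in the second claim, where the paper uses the decomposition $M_{A,k} = \bigl(\alpha(G)(A+I)-J\bigr) + (k-\alpha(G))(A+I)$ while you derive the equivalent quantitative bound $q_{A,k}(y) \ge k/\alpha(G) - 1 > 0$ directly.
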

\begin{proof}
	First observe that $\alpha(G) = \omega(\bar{G})$, and that the adjacency matrix of $\bar{G}$ is $J - A - I$. Thus from Theorem~\ref{Thm: Motzkin Straus}, the maximum value of $x^T(J-A-I)x$ over $\Delta_n$ is $1 - \frac{1}{\alpha(G)}$, and hence the minimum value of $x^T(A+I)x$ over $\Delta_n$ is $\frac{1}{\alpha(G)}$. Therefore, for any $k > 0$, $\alpha (G) \le k$ if and only if $x^T(A+I)x \ge \frac{1}{k}$ for all $x \in \Delta_n$, which holds if and only if $x^T(k(A+I)-J)x \ge 0$ for all $x \in \Delta_n.$ The first statement of the corollary then follows from the homogeneity of $x^T(k(A+I)-J)x$.
	
	To show that $p_{A,k}$ is positive definite when $\alpha(G) < k$, observe that
	$$kA+kI-J = (\alpha(G)(A+I)-J) + (k - \alpha(G))(A+I).$$
	Considering the two terms on the right separately, we observe that $(x^2)^T (\alpha(G)(A+I)-J) x^2$ (i.e., $p_{A,\alpha(G)})$ is nonnegative since $M_{A,\alpha(G)}$ is copositive, and that $(x^2)^T (k-\alpha(G))(A+I)x^2$ is positive definite. Therefore, their sum $(x^2)^T(kA+kI-J )x^2$ is positive definite.
\end{proof}

We now present the proof of Theorem~\ref{Thm: LM Degree 4 NP hard}. While the statement of the theorem is given for degree-4 polynomials, it is straightforward to extend the result to higher-degree polynomials. We note that degree four is the smallest degree for which deciding existence of local minimizers is intractable. For degree-3 polynomials, it turns out that this question can be answered by solving semidefinite programs of polynomial size~\cite{cubicpaper}.

\begin{proof}[Proof (of Theorem~\ref{Thm: LM Degree 4 NP hard})]
	We present a polynomial-time reduction from the STABLESET problem. Let a graph $G$ on $n$ vertices with adjacency matrix $A$ and a positive integer $r \le n$ be given. We show that $G$ has a stable set of size $r$ if and only if the quartic form $p_{A,r-0.5}$ defined in (\ref{Def: pAk}) has no local minimizer. This is a consequence of the following more general fact that we prove below: For a noninteger scalar $k$, the quartic form $p_{A,k}$ has no local minimizer if and only if $\alpha(G) \ge k$.
	
	We first observe that if $\alpha(G) < k$, then $p_{A,k}$ has a local minimizer. Indeed, recall from the second claim of Corollary~\ref{Cor: Motzkin Straus 2} that under this assumption, $p_{A,k}$ is positive definite. Since $p_{A,k}$ vanishes at the origin, it follows that the origin is a local minimizer. Suppose now that $\alpha(G) \ge k$. Since $k$ is noninteger, this implies that $\alpha(G) > k$. We show that in this case, $p_{A,k}$ has no local minimizer by showing that the origin must be the only second-order point of $p_{A,k}$. Since any local minimizer of a polynomial is a second-order point, only the origin can be a candidate local minimizer for $p_{A,k}$. However, by the first claim of Corollary~\ref{Cor: Motzkin Straus 2}, the matrix $M_{A,k}$ is not copositive and hence $p_{A,k}$ is not nonnegative. As $p_{A,k}$ is homogeneous, this implies that $p_{A,k}$ takes negative values arbitrarily close to the origin, ruling out the possibility of the origin being a local minimizer.
	
	To show that when $\alpha(G) > k$, the origin is the only second-order point of $p_{A,k}$, we compute the gradient and Hessian of $p_{A,k}$. We have
	$$\grad p_{A,k}(x) = 4x \odot M_{A,k} x^2,$$
	and
	$$\Hess p_{A,k}(x) = 8M_{A,k} \odot xx^T + 4Diag(M_{A,k} x^2).$$
	Suppose for the sake of contradiction that $p_{A,k}$ has a nonzero second-order point $\xbar$. Since $\xbar$ is a critical point, $\grad p_{A,k}(\xbar)=0$ and thus $(M_{A,k} \xbar^2)_{\xbar} = 0$. It then follows that
	$$(\Hess p_{A,k}(\xbar))_{\xbar} = 8(M_{A,k})_{\xbar} \odot \xbar_{\xbar}\xbar_{\xbar}^T.$$
	Because $\Hess p_{A,k}(\xbar) \succeq 0$ and thus all its principal submatrices are positive semidefinite, we have $8(M_{A,k})_{\xbar} \odot \xbar_{\xbar}\xbar_{\xbar}^T \succeq 0$. Since
	$$(M_{A,k})_{\xbar} \odot \xbar_{\xbar}\xbar_{\xbar}^T = Diag(\xbar_{\xbar}) (M_{A,k})_{\xbar} Diag(\xbar_{\xbar}),$$
	and since $Diag(\xbar_{\xbar})$ is an invertible matrix, it follows that $(M_{A,k})_{\xbar} \succeq 0$.

	We now consider the induced subgraph $G_{\xbar}$ of $G$ with vertices corresponding to the indices of the nonzero entries of $\xbar$. Note that the adjacency matrix of $G_{\xbar}$ is $A_{\xbar}$. Furthermore, observe that $M_{A_{\xbar},k} = (M_{A,k})_{\xbar}$, and therefore $M_{A_{\xbar},k}$ is positive semidefinite and thus copositive. We conclude from the first claim of Corollary~\ref{Cor: Motzkin Straus 2} that $\alpha(G_{\xbar}) \le k$. We now claim that
	$$M_{A_{\xbar},k}\xbar_{\xbar}^2 = (M_{A,k})_{\xbar}\xbar_{\xbar}^2 = (M_{A,k}\xbar^2)_{\xbar} = 0.$$
	The first equality follows from that $M_{A_{\xbar},k} = (M_{A,k})_{\xbar}$, the second from that the indices of the nonzero entries of $\xbar$ are the same as those of $\xbar^2$, and the third from that $\grad p_{A,k} (\xbar) = 0$. Hence, $p_{A_{\xbar},k}(\xbar_{\xbar}) = 0$. Since $\xbar_{\xbar}$ is nonzero, $p_{A_{\xbar},k}$ is not positive definite. By the second claim of Corollary~\ref{Cor: Motzkin Straus 2}, we must have $\alpha(G_{\xbar}) \ge k$. Therefore, $\alpha(G_{\xbar}) = k$. However, because $k$ was assumed to be noninteger, we have a contradiction.
\end{proof}

It turns out that the proof of Theorem~\ref{Thm: LM Degree 4 NP hard} also shows that it is NP-hard to decide if a quartic polynomial has a strict local minimizer. Recall that a \emph{strict local minimizer} of a function $f: \Rn \to \R$ over a set $\Omega \subseteq \Rn$ is a point $\xbar \in \Omega$ for which there exists a scalar $\epsilon > 0$ such that $p(\xbar) < p(x)$ for all $x \in \Omega\backslash \xbar$ with $\|x - \xbar\| \le \epsilon$.

\begin{cor}\label{Cor: SLM Degree 4 NP hard}
	It is strongly NP-hard to decide if a degree-4 polynomial has a strict local minimizer.
\end{cor}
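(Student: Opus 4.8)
The plan is to revisit the proof of Theorem~\ref{Thm: LM Degree 4 NP hard} and observe that the \emph{same} reduction, with essentially no modification, settles the strict version. Recall that there the reduction sends an instance $(G,r)$ of STABLESET to the quartic form $p_{A,r-0.5}$, and the key dichotomy proved was: for noninteger $k$, the form $p_{A,k}$ has no local minimizer iff $\alpha(G) \ge k$. I would argue that for noninteger $k$ we also have: $p_{A,k}$ has a strict local minimizer iff $\alpha(G) < k$. Combined with the STABLESET reduction (with $k = r - 0.5$), this gives strong NP-hardness of deciding the existence of a strict local minimizer of a quartic polynomial.

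The first direction is immediate from the existing proof: if $\alpha(G) < k$, Corollary~\ref{Cor: Motzkin Straus 2} tells us $p_{A,k}$ is positive definite, so since it vanishes at the origin, the origin is in fact a \emph{strict} global (hence strict local) minimizer. For the converse, suppose $\alpha(G) \ge k$; since $k$ is noninteger this means $\alpha(G) > k$, and I would invoke the argument already in the proof of Theorem~\ref{Thm: LM Degree 4 NP hard} showing that in this regime the origin is the only second-order point of $p_{A,k}$, hence the only candidate for \emph{any} local minimizer, strict or not. But $M_{A,k}$ is then not copositive, so $p_{A,k}$ is a homogeneous quartic taking negative values; by homogeneity it takes negative values arbitrarily close to the origin, and since $p_{A,k}(0)=0$ the origin is not even a (non-strict) local minimizer, let alone a strict one. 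Thus no strict local minimizer exists.

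Putting these together: $p_{A,k}$ has a strict local minimizer $\iff$ $\alpha(G) < k$ $\iff$ $p_{A,k}$ has a (non-strict) local minimizer, so the strict and non-strict existence questions coincide on all instances produced by the reduction, and the reduction from STABLESET carries over verbatim. I do not anticipate a genuine obstacle here; the only point requiring a word of care is to confirm that the ``only second-order point is the origin'' argument from the previous proof uses nothing beyond $\xbar$ being a second-order point (it does not—it only uses the critical point condition and positive semidefiniteness of the Hessian), so that it applies equally when ruling out strict local minimizers. One could phrase the whole corollary as a one-paragraph appeal to the proof of Theorem~\ref{Thm: LM Degree 4 NP hard} together with the observation that positive definiteness of $p_{A,k}$ makes the origin a strict minimizer.
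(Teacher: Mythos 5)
Your proposal is correct and follows essentially the same route as the paper: the paper's proof of this corollary likewise observes that when $\alpha(G) < k$ the positive definiteness of $p_{A,k}$ makes the origin a strict local minimizer, while when $\alpha(G) \ge k$ no local minimizer (strict or otherwise) exists, so the same reduction applies verbatim.
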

\begin{proof}
	Observe from the proof of Theorem~\ref{Thm: LM Degree 4 NP hard} that for a graph $G$ and a noninteger scalar $k$, the quartic form $p_{A,k}$ has a local minimizer if and only if $\alpha(G)<k$. In the case where $p_{A,k}$ does have a local minimizer, we showed that $p_{A,k}$ is positive definite, and thus the local minimizer (the origin) must be a strict local minimizer.
\end{proof}

We now turn our attention to local minimizers of quadratic programs.

\begin{theorem}\label{Thm: LM QP NP hard}
	It is strongly NP-hard to decide if a quadratic function has a local minimizer over a polyhedron. The same is true for deciding if a quadratic function has a strict local minimizer over a polyhedron.
\end{theorem}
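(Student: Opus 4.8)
The plan is to give a strongly-polynomial reduction from STABLESET that parallels the proof of Theorem~\ref{Thm: LM Degree 4 NP hard}, but with the quadratic form $q_{A,k}$ of~(\ref{Def: qAk}) in place of the quartic $p_{A,k}$ and with the nonnegative orthant as the polyhedron (note the orthant is unbounded, which is the regime in which an existence question is meaningful). Given a graph $G$ on $n$ vertices with adjacency matrix $A$ and an integer $r \le n$, set $k \defeq r - \tfrac12$ (so $k$ is noninteger) and consider the quadratic program
\[
\min_{x \in \Rn} \ q_{A,k}(x) = x^T M_{A,k} x \qquad \text{subject to} \qquad x \ge 0 .
\]
I would show that this program has a local minimizer if and only if $\alpha(G) < k$, equivalently (since $k=r-\tfrac12$) if and only if $\alpha(G) \le r-1$, i.e. if and only if $G$ has no stable set of size $r$; and that the same equivalence holds verbatim with ``local minimizer'' replaced by ``strict local minimizer.'' Since the entries of $M_{A,k}$ are integer multiples of $\tfrac12$ of magnitude $O(n)$ and the orthant has a trivial description, this reduction certifies strong NP-hardness.

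\emph{Forward direction} ($\alpha(G) < k \Rightarrow$ a strict local minimizer exists). By the second claim of Corollary~\ref{Cor: Motzkin Straus 2}, $p_{A,k}$ is positive definite; since every nonnegative vector $y$ is of the form $x^2$, this says precisely that $y^T M_{A,k} y > 0$ for all $y \ge 0$, $y \ne 0$. As $q_{A,k}$ vanishes at the origin, the origin is a strict (indeed global) local minimizer of $q_{A,k}$ over the orthant, which handles both the non-strict and the strict versions.

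\emph{Reverse direction} ($\alpha(G) \ge k \Rightarrow$ no local minimizer). Since $k$ is noninteger, $\alpha(G) > k$, so $M_{A,k}$ is not copositive by the first claim of Corollary~\ref{Cor: Motzkin Straus 2}, and by homogeneity $q_{A,k}$ takes negative values arbitrarily near the origin, so the origin is not a local minimizer. Suppose for contradiction that $q_{A,k}$ has a nonzero local minimizer $\xbar \ge 0$; let $S$ be the support of $\xbar$, so that near $\xbar$ the only active constraints are $x_i \ge 0$ for $i \notin S$, and $\xbar$ is an unconstrained local minimizer of $q_{A,k}$ restricted to the subspace $\{x : x_i = 0,\ i \notin S\}$. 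The first- and second-order conditions on that subspace read $(M_{A,k}\xbar)_{\xbar} = 0$ and $(M_{A,k})_{\xbar} \succeq 0$. As in the proof of Theorem~\ref{Thm: LM Degree 4 NP hard}, writing $A_{\xbar}$ for the adjacency matrix of the induced subgraph $G_{\xbar}$ on $S$ and using $(M_{A,k})_{\xbar} = M_{A_{\xbar},k}$, positive semidefiniteness implies copositivity, so $\alpha(G_{\xbar}) \le k$ by Corollary~\ref{Cor: Motzkin Straus 2}; on the other hand $q_{A_{\xbar},k}(\xbar_{\xbar}) = \xbar_{\xbar}^T M_{A_{\xbar},k}\xbar_{\xbar} = 0$ with $\xbar_{\xbar} > 0$, so $M_{A_{\xbar},k}$ is not strictly copositive, hence $p_{A_{\xbar},k}$ is not positive definite, and the contrapositive of the second claim of Corollary~\ref{Cor: Motzkin Straus 2} gives $\alpha(G_{\xbar}) \ge k$. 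Thus $\alpha(G_{\xbar}) = k$, contradicting that $k$ is noninteger. Since the ``yes'' case produced a \emph{strict} local minimizer, the identical argument proves NP-hardness of the strict version.

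\emph{Main obstacle.} The delicate part is the reverse direction: ruling out boundary local minimizers of a nonconvex quadratic over the orthant. The $2\times 2$ example in the introduction shows that copositivity of $M_{A,k}$ is \emph{not} equivalent to existence of a local minimizer, so one genuinely needs the minimal-face restriction together with the algebraic identity $(M_{A,k})_{\xbar} = M_{A_{\xbar},k}$, which is what lets the induced-subgraph argument close. The reason noninteger $k$ suffices is the same ``$\alpha(G_{\xbar}) \le k$ and $\alpha(G_{\xbar}) \ge k$'' squeeze as in the quartic proof.
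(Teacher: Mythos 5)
Your proposal is correct, and it uses the same reduction (the program $\min\{q_{A,k}(x) : x \ge 0\}$ with $k = r - \tfrac12$) and the same key ingredients (Corollary~\ref{Cor: Motzkin Straus 2} and the identity $(M_{A,k})_{\xbar} = M_{A_{\xbar},k}$) as the paper. The one structural difference is in how the equivalence is established. The paper does not redo the second-order analysis at all: it observes that $x \mapsto \sqrt{x}$ gives a two-line correspondence between local minimizers of the QP over the orthant and local minimizers of the quartic $p_{A,k}$ (a decreasing sequence for one problem maps to a decreasing sequence for the other), and then simply invokes Theorem~\ref{Thm: LM Degree 4 NP hard}. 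You instead re-derive the hard direction natively in the QP setting: you restrict to the minimal face determined by the support of a putative local minimizer $\xbar$, extract the first- and second-order conditions $(M_{A,k}\xbar)_{\xbar} = 0$ and $(M_{A,k})_{\xbar} \succeq 0$ for the unconstrained restricted problem, and run the same $\alpha(G_{\xbar}) \le k$ versus $\alpha(G_{\xbar}) \ge k$ squeeze. Both routes are sound; the paper's is shorter because it reuses Theorem~\ref{Thm: LM Degree 4 NP hard} as a black box, while yours is self-contained and makes the role of the active-set/KKT structure of the QP explicit, at the cost of essentially duplicating the quartic argument.
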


\begin{proof}
	We present a polynomial-time reduction from the STABLESET problem to the problem of deciding if a quadratic function has a local minimizer over a polyhedron. The reader can check that same reduction is valid for the case of strict local minimizers.
	
	Let a graph $G$ on $n$ vertices with adjacency matrix $A$ and a positive integer $r \le n$ be given. Let $k = r - 0.5$, $q_{A,k}$ be the quadratic form defined in (\ref{Def: qAk}), and consider the optimization problem
	\begin{equation}\label{Eq: SS QP}
		\begin{aligned}
			& \underset{x \in \Rn}{\min}
			& & q_{A,k}(x) \\
			& \text{subject to}
			&& x \ge 0.
		\end{aligned}
	\end{equation}
	We show that a point $x \in \Rn$ is a local minimizer of (\ref{Eq: SS QP}) if and only if $\sqrt{x}$ is a local minimizer of the quartic form $p_{A,k}$ defined in (\ref{Def: pAk}). By the arguments in the proof of Theorem~\ref{Thm: LM Degree 4 NP hard}, we would have that (\ref{Eq: SS QP}) has no local minimizer if and only if $G$ has a stable set of size $r$.
	
	Indeed, if $x$ is not a local minimizer of (\ref{Eq: SS QP}), there exists a sequence $\{y_j\} \subseteq \Rn$ with $y_j \to x$ and such that for all $j$, $y_j \ge 0$ and $q_{A,k}(y_j) < q_{A,k}(x)$. The sequence $\{\sqrt{y_j}\}$ would then satisfy $p_{A,k}(\sqrt{y_j}) < p_{A,k}(\sqrt{x})$ and $\sqrt{y_j} \to \sqrt{x}$, proving that $\sqrt{x}$ is not a local minimizer of $p_{A,k}$. Similarly, if $x$ is not a local minimizer of $p_{A,k}$, there exists a sequence $\{z_j\} \subseteq \Rn$ such that $z_j \to x$ and $p_{A,k}(z_j) < p_{A,k}(x)$ for all $j$. The sequence $\{z_j^2\}$ would then prove that $x^2$ is not a local minimizer of (\ref{Eq: SS QP}).
	
\end{proof}

\begin{remark}
	The decision problems that Theorems~\ref{Thm: LM Degree 4 NP hard} and~\ref{Thm: LM QP NP hard} prove to be NP-hard are unlikely to be in NP. Indeed, membership of these problems in NP would imply (via our reductions) that STABLESET is in co-NP, which would further imply that NP=co-NP.
\end{remark}

\subsection{Complexity of Finding a Local Minimizer of a Quadratic Function Over a Polytope}

We now address the original question posed by Pardalos and Vavasis concerning the complexity of finding a local minimizer of a quadratic program with a compact feasible set. Note again that if the feasible set is compact, the existence of a local minimizer is guaranteed. In fact, there will always be a local minimizer that has rational entries with polynomial bitsize~\cite{vavasis1990quadratic}.

\begin{theorem}\label{Thm: Finding LM NP-hard}
	If there is a polynomial-time algorithm that finds a point within Euclidean distance $c^n$ (for any constant $c \ge 0$) of a local minimizer of an $n$-variate quadratic function over a polytope, then $P = NP$.
\end{theorem}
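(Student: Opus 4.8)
The plan is to reduce from the STABLESET-based construction used for Theorem~\ref{Thm: LM QP NP hard}, but package it inside a \emph{compact} feasible set so that a local minimizer is always guaranteed to exist, and then argue that if one could compute (even approximately, within distance $c^n$) a local minimizer, one could read off from its location whether $G$ has a stable set of size $r$. The key idea is to take the quadratic form $q_{A,k}$ with $k = r - 0.5$ restricted to the nonnegative orthant as in \eqref{Eq: SS QP}, and intersect the orthant with a large box, say $\{x : 0 \le x_i \le R\}$ for a suitable $R$ (e.g.\ $R = 1$ suffices after homogeneity considerations, or $R$ polynomially bounded). Call this polytope $\Omega$. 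Since $q_{A,k}$ is a homogeneous quadratic, its behavior near the origin already encodes the STABLESET answer: when $\alpha(G) < k$ (no stable set of size $r$), the form $p_{A,k}$ is positive definite by Corollary~\ref{Cor: Motzkin Straus 2}, hence $q_{A,k}(x) = (\sqrt{x})^T M_{A,k}(\sqrt{x})\cdot(\ldots)$ — more precisely $q_{A,k}$ is copositive and in fact positive on the orthant minus the origin — so the origin is a (strict) local minimizer of \eqref{Eq: SS QP} and of the box-restricted problem; when $\alpha(G) > k$ the origin is \emph{not} a second-order point and $q_{A,k}$ dips below zero arbitrarily close to it along the orthant.

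The crux is the following dichotomy, which I would prove first as a lemma: there is a scalar $\delta > 0$, computable in polynomial time and bounded below by an inverse polynomial in the input size (here is where strong NP-hardness and the $\log n$ bitsize of the Motzkin–Straus-type data matter), such that (a) if $G$ has no stable set of size $r$, then \emph{every} local minimizer of $q_{A,k}$ over $\Omega$ lies in the ball of radius $\delta$ around the origin — indeed I expect the origin to be the unique local minimizer in a neighborhood, and by homogeneity/copositivity arguments no other point in the box can be a local minimizer either, or at worst all of them are far (distance $\ge$ some fixed constant) from the origin; whereas (b) if $G$ \emph{does} have a stable set of size $r$, then $q_{A,k}$ over $\Omega$ has \emph{no} local minimizer within distance, say, $2\delta$ of the origin (in fact none at all near the origin, by the second-order-point argument of Theorem~\ref{Thm: LM Degree 4 NP hard} transported through the $x \mapsto \sqrt{x}$ correspondence of Theorem~\ref{Thm: LM QP NP hard}), so every local minimizer of the box problem is bounded away from the origin by a fixed constant. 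The separation between ``within $\delta$'' and ``beyond a fixed constant'' is what lets an approximate oracle decide.

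To get the $c^n$ slack, I would then rescale: replace the variables $x$ by $x/\lambda$ for a large $\lambda$, equivalently blow up the polytope by a factor $\lambda$, chosen so that in the YES-instance case the nearest local minimizer is at distance $\ge (c+1)^n$ from the origin while in the NO-instance case there is a local minimizer within distance $\delta' \ll 1$; one checks that scaling the feasible polytope by $\lambda$ scales all distances between local minimizers and the origin by $\lambda$, and picking $\lambda$ of order $(c+1)^n / \delta$ — which has polynomial bitsize since $\delta^{-1}$ is polynomial and $(c+1)^n$ has bitsize $O(n\log c)$ — makes the two cases distinguishable by \emph{any} algorithm returning a point within $c^n$ of a true local minimizer: in the NO-instance the returned point must be within $c^n + \delta'$ of the origin, in the YES-instance it must be at distance $\ge (c+1)^n - c^n \ge 1$ from the origin. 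Hence running the hypothetical algorithm and testing whether its output is close to the origin decides STABLESET in polynomial time, forcing $P = NP$.

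The main obstacle I anticipate is controlling \emph{all} local minimizers of the box-restricted quadratic, not just the origin: adding the box constraints $x_i \le R$ introduces new faces on which $q_{A,k}$, restricted, could acquire spurious local minimizers whose location has nothing to do with the STABLESET instance. I would handle this either by choosing $R$ large enough (relative to the rescaling $\lambda$) that no local minimizer can occur on the outer faces $\{x_i = R\}$ — using that $q_{A,k}$ is homogeneous quadratic and, along rays from the origin staying in the orthant, is monotone in $|x|$ wherever it is nonzero — so that the only relevant behavior is near the origin; or, more robustly, by arguing that the \emph{closest} local minimizer to the origin is the only thing the oracle can be forced to report, and its location is governed entirely by the copositivity status of $M_{A,k}$ via Corollaries~\ref{Cor: Motzkin Straus 2} and the $\sqrt{\cdot}$ correspondence. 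Making the inverse-polynomial lower bound on $\delta$ explicit (so the whole reduction is genuinely polynomial-time and strong-NP-hardness is preserved) is the other technical point requiring care.
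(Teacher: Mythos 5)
Your proposal follows essentially the same route as the paper: reduce from STABLESET via $q_{A,k}$ with $k=r-0.5$, compactify the nonnegative orthant with one extra constraint scaled to roughly $c^n$ (with polynomial bitsize), and observe that the origin is the unique local minimizer when $\alpha(G)<k$ while every local minimizer lies on the outer boundary, hence far from the origin, when $\alpha(G)>k$. The obstacles you anticipate largely dissolve in the paper's version: it uses the single constraint $\sum_i x_i \le 3c^n\sqrt{n}$ rather than a box, so boundary minimizers need not be located---only shown to lie on a hyperplane at distance $3c^n$ from the origin, which follows because problem (\ref{Eq: SS QP}) has no local minimizer at all when $\alpha(G)>k$---and no inverse-polynomial $\delta$ is needed, since homogeneity of $q_{A,k}$ makes the origin the \emph{unique} local minimizer in the no-stable-set case.
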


\begin{proof}
	Fix any constant $c \ge 0$. We show that if an algorithm could take as input a quadratic program with a bounded feasible set and in polynomial time return a point within distance $c^n$ of any local minimizer, then this algorithm would solve the STABLESET problem in polynomial time.
	
	Let a graph $G$ on $n$ vertices with adjacency matrix $A$ and a positive integer $r \le n$ be given. Let $k = r - 0.5$, $q_{A,k}$ be the quadratic form defined in (\ref{Def: qAk}), and consider the quadratic program
	\begin{equation}\label{Eq: SS Approx QP}
		\begin{aligned}
			& \underset{x \in \Rn}{\min}
			& & q_{A,k}(x) \\
			& \text{subject to}
			&& x \ge 0,\\
			&&& \sum_{i=1}^n x_i \le \ceil{3 c^n \sqrt{n}}.
		\end{aligned}
	\end{equation}
	Note that the feasible set of this problem is bounded. Moreover, the number of bits required to write down this quadratic program is polynomial in $n$. Indeed, the scalar $\ceil{3c^n\sqrt{n}}$ takes at most $2+n\lceil \log_2(c+1) \rceil+\frac{1}{2}\lceil \log_2(n+1) \rceil$ bits to write down, and the remaining $O(n^2)$ numbers in the problem data are bounded in magnitude by $n$, so they each take $O(\log_2(n))$ bits to write down.
	
	We will show that if $\alpha(G) < k$, the origin is the unique local minimizer of (\ref{Eq: SS Approx QP}), and that if $\alpha(G) \ge k$ (or equivalently $\alpha(G) > k$), any local minimizer $\xbar$ of (\ref{Eq: SS Approx QP}) satisfies $\sum_{i=1}^n \xbar_i = \ceil{3 c^n \sqrt{n}}$. Since the (Euclidean) distance from the origin to the hyperplane $\{ x \in \Rn | \sum_{i=1}^n x_i = \ceil{3 c^n \sqrt{n}}\}$ is at least $3c^n$, there is no point that is within distance $c^n$ of both the origin and this hyperplane. Thus, the graph $G$ has no stable set of size $r$ (or equivalently $\alpha(G) < k$) if and only if the Euclidean norm of all points within distance $c^n$ of any local minimizer of (\ref{Eq: SS Approx QP}) is less than or equal to $c^n$.
	
	To see why $\alpha(G) < k$ implies that the origin is the unique local minimizer of (\ref{Eq: SS Approx QP}), recall from the second claim of Corollary~\ref{Cor: Motzkin Straus 2} that the quartic form $p_{A,k}$ defined in (\ref{Def: pAk}) must be positive definite. Thus, for any nonzero vector $x \ge 0$, we have $q_{A,k}(x) > 0$. This implies that the origin is a local minimizer of (\ref{Eq: SS Approx QP}). Moreover, since $q_{A,k}$ is homogeneous, we have that no other feasible point can be a local minimizer. Indeed, for any nonzero vector $x \ge 0$ and any nonnegative scalar $\epsilon < 1$, $q_{A,k}(\epsilon x) < q_{A,k}(x)$.
	
	To see why when $\alpha(G) > k$, the last constraint of (\ref{Eq: SS Approx QP}) must be tight at all local minimizers, recall from the proof of Theorem~\ref{Thm: LM QP NP hard} that when $\alpha(G) > k$, the optimization problem in (\ref{Eq: SS QP}) has no local minimizer. Therefore, for any vector $x$ that is feasible to (\ref{Eq: SS Approx QP}) and satisfies $\sum_{i=1}^n x_i < \ceil{3 c^n \sqrt{n}}$, there exists a sequence $\{y_i\} \subseteq \Rn$ with $y_i \to x$, and satisfying $$y_i \ge 0, \sum_{i=1}^n y_i < \ceil{3 c^n \sqrt{n}}, q_{A,k}(y_i) < q_{A,k}(x), \forall i.$$ As the points $y_i$ are feasible to (\ref{Eq: SS Approx QP}), any vector $x$ satisfying $\sum_{i=1}^n x_i < \ceil{3 c^n \sqrt{n}}$ cannot be a local minimizer of (\ref{Eq: SS Approx QP}). Thus, if $\alpha(G) > k$, any local minimizer $\xbar$ of (\ref{Eq: SS Approx QP}) satisfies $\sum_{i=1}^n \xbar_i = \ceil{3 c^n \sqrt{n}}$.
	
\end{proof}

By replacing the quantity $\ceil{3 c^n \sqrt{n}}$ in the proof of Theorem~\ref{Thm: Finding LM NP-hard} with $\ceil{3n^{c+0.5}}$ and $2n$ respectively, we get the following two corollaries.

\begin{cor}\label{Cor: QP Pseudo}
	If there is a pseudo-polynomial time algorithm that finds a point within Euclidean distance $n^c$ (for any constant $c \ge 0$) of a local minimizer of an $n$-variate quadratic function over a polytope, then $P = NP$.
\end{cor}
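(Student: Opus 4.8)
The plan is to reuse the reduction from STABLESET in the proof of Theorem~\ref{Thm: Finding LM NP-hard} essentially verbatim, changing only the right-hand side of the bounding constraint. The point of the change is to keep every numerical datum of the constructed quadratic program polynomially bounded in magnitude, so that a pseudo-polynomial time algorithm --- which by definition runs in time polynomial in the magnitude of the data --- becomes an honest polynomial-time algorithm on this family. Fix a constant $c \ge 0$, which we may take to be rational (otherwise replace it by $\ceil{c}$). Given a graph $G$ on $n$ vertices with adjacency matrix $A$ and a positive integer $r \le n$, set $k = r - 0.5$, let $q_{A,k}$ be the quadratic form in (\ref{Def: qAk}), and consider the program obtained from (\ref{Eq: SS Approx QP}) by replacing the constraint $\sum_{i=1}^n x_i \le 3 c^n \sqrt{n}$ with $\sum_{i=1}^n x_i \le 3 n^{c+0.5}$. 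The feasible set is still a polytope; the only large number in the data, namely $3 n^{c+0.5}$, has magnitude polynomial in $n$, while the remaining $O(n^2)$ entries of $M_{A,k}$ are bounded in magnitude by $n$.

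Next I would recompute the single geometric quantity that changes. The Euclidean distance from the origin to the hyperplane $\{x \in \Rn \mid \sum_{i=1}^n x_i = 3 n^{c+0.5}\}$ equals $3 n^{c+0.5}/\sqrt{n} = 3 n^{c}$, and since $3 n^{c} > 2 n^{c}$, no point can be within Euclidean distance $n^c$ of both the origin and this hyperplane. The rest of the argument is identical to the proof of Theorem~\ref{Thm: Finding LM NP-hard}: by the second claim of Corollary~\ref{Cor: Motzkin Straus 2} and homogeneity of $q_{A,k}$, if $\alpha(G) < k$ then the origin is the unique local minimizer of the modified program; and by the proof of Theorem~\ref{Thm: LM QP NP hard}, if $\alpha(G) \ge k$ (equivalently $\alpha(G) > k$, since $k$ is noninteger) then (\ref{Eq: SS QP}) has no local minimizer, so every local minimizer of the modified program must make the last constraint tight. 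Hence $G$ has no stable set of size $r$ if and only if every point within distance $n^c$ of every local minimizer of the modified program has Euclidean norm at most $n^c$, and running the hypothesized algorithm and checking the norm of its output decides STABLESET in polynomial time; since STABLESET is (strongly) NP-hard, $P = NP$.

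I do not anticipate a genuine obstacle, since the argument is merely a rescaling of an already-proved theorem. The only two points needing care are (i) confirming that with the right-hand side $3 n^{c+0.5}$ all numerical data have magnitude polynomial in $n$, so that the qualifier ``pseudo-polynomial'' collapses to ``polynomial'' on this family of instances, and (ii) redoing the distance-to-hyperplane computation to verify the gap $3 n^c$ still strictly exceeds $2 n^c$, which is what keeps the ``origin'' and ``hyperplane'' cases separated and hence makes the norm test a correct decision procedure for STABLESET.
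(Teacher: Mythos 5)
Your proposal is correct and matches the paper's own argument, which literally consists of the instruction to replace $3c^n\sqrt{n}$ by $3n^{c+0.5}$ in the proof of Theorem~\ref{Thm: Finding LM NP-hard}; you have simply carried out that substitution in full, including the two checks that matter (polynomially bounded data and the distance-to-hyperplane computation giving $3n^c > 2n^c$).
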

\begin{cor}\label{Cor: 2n NP-hard}
	If there is a polynomial-time algorithm that finds a point within Euclidean distance $\epsilon \sqrt{n}$ (for any constant $\epsilon \in [0,1)$) of a local minimizer of a restricted set of quadratic programs over $n$ variables whose numerical data are integers bounded in magnitude by $2n$, then $P = NP$.
\end{cor}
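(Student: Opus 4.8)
The plan is to re-run the proof of Theorem~\ref{Thm: Finding LM NP-hard} with the budget constraint $\sum_{i=1}^n x_i \le 3c^n\sqrt n$ replaced by $\sum_{i=1}^n x_i \le 2n$, and with the objective rescaled so that all data become integers of magnitude at most $2n$. Concretely, given a graph $G$ on $n$ vertices with adjacency matrix $A$ and an integer $r \le n$, set $k = r - 0.5$ and consider the quadratic program
\begin{equation*}
\begin{aligned}
& \underset{x \in \Rn}{\min} & & 2\,q_{A,k}(x) = x^T\bigl((2r-1)(A+I) - 2J\bigr)x \\
& \text{subject to} & & x \ge 0,\ \textstyle\sum_{i=1}^n x_i \le 2n.
\end{aligned}
\end{equation*}
This is an instance of the claimed restricted family: the matrix $(2r-1)(A+I) - 2J$ has integer entries (each equal to $2r-3$ or $-2$, since $A$ has zero diagonal), each of magnitude at most $2n$ because $r \le n$; the nonnegativity constraints have coefficients in $\{0,\pm 1\}$; and the right-hand side $2n$ is an integer of magnitude $2n$. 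The multiplier $2$ is needed precisely to clear the half-integer introduced by $k = r - 0.5$. Since rescaling the objective by a positive constant does not change the set of local minimizers, it suffices to analyze $q_{A,k}$ over the polytope $P_n \defeq \{x \ge 0 : \sum_i x_i \le 2n\}$.

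Next I would transplant the two case analyses from the proof of Theorem~\ref{Thm: Finding LM NP-hard}, neither of which depends on the value of the budget (only on its being positive). If $\alpha(G) < k$, then by the second claim of Corollary~\ref{Cor: Motzkin Straus 2} the quartic $p_{A,k}$ is positive definite, hence $q_{A,k}(x) > 0$ for every nonzero $x \ge 0$; together with $q_{A,k}(0)=0$ and homogeneity of $q_{A,k}$, this forces the origin to be the unique local minimizer over $P_n$. If instead $\alpha(G) \ge k$ --- equivalently $\alpha(G) > k$, since $k$ is noninteger --- then by (the proof of) Theorem~\ref{Thm: LM QP NP hard} the problem $\min\{q_{A,k}(x) : x \ge 0\}$ has no local minimizer, so any feasible point with $\sum_i x_i < 2n$ admits feasible nearby points of strictly smaller objective value and hence is not a local minimizer over $P_n$; thus every local minimizer $\xbar$ of the instance satisfies $\sum_i \xbar_i = 2n$, and therefore $\|\xbar\| \ge 2n/\sqrt n = 2\sqrt n$.

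Finally I would combine these facts with the hypothesized approximation oracle: run it on the instance above to get, in polynomial time, a point $y$ within Euclidean distance $\epsilon\sqrt n$ of some local minimizer. If $\alpha(G) < k$, the unique local minimizer is the origin, so $\|y\| \le \epsilon\sqrt n$; if $\alpha(G) > k$, then $\|y\| \ge \|\xbar\| - \epsilon\sqrt n \ge (2-\epsilon)\sqrt n$. Because $\epsilon \in [0,1)$ we have $\epsilon\sqrt n < (2-\epsilon)\sqrt n$, so the single test ``$\|y\| \le \epsilon\sqrt n$?'' decides whether $G$ has a stable set of size $r$, and a polynomial-time oracle of this type would yield $P = NP$. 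The only genuine use of the hypothesis $\epsilon < 1$ is to guarantee this strict separation; the sole bookkeeping point requiring care is the $2n$ magnitude bound on the rescaled objective matrix, which is routine given $r \le n$.
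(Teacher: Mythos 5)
Your proposal is correct and matches the paper's intended argument: the paper proves this corollary in one line by replacing the budget $3c^n\sqrt{n}$ in the proof of Theorem~\ref{Thm: Finding LM NP-hard} with $2n$, which is exactly what you do, and your additional bookkeeping (rescaling the objective by $2$ to clear the half-integer in $k=r-0.5$, and checking the $2n$ magnitude bound and the $(2-\epsilon)\sqrt{n}$ versus $\epsilon\sqrt{n}$ separation) correctly fills in the details the paper leaves implicit.
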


In~\cite{pardalos1992open}, Pardalos and Vavasis also propose two follow-up questions about quadratic programs with compact feasible sets. The first is about the complexity of finding a ``KKT point''. As is, our proof does not have any implications for this question since the origin is always a KKT point of the quadratic programs that arise from our reductions. The second question asks whether finding a local minimizer is easier in the special case where the problem only has one local minimizer (which is thus also the global minimizer). Related to this question, we can prove the following claim.

\begin{theorem}\label{Thm: UniqueQP NP Hard}
	If there is a polynomial-time algorithm which decides whether a quadratic program with a bounded feasible set has a unique local minimizer, and if so returns this minimizer\footnote{This unique local (and therefore global) minimizer is guaranteed to have rational entries with polynomial bitsize; see~\cite{vavasis1990quadratic}.}, then P=NP.
\end{theorem}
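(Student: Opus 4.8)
The plan is to feed the hypothetical algorithm (almost) the same quadratic program that appears in the proof of Theorem~\ref{Thm: Finding LM NP-hard}. Given a graph $G$ on $n$ vertices with adjacency matrix $A$ and a positive integer $r \le n$, set $k = r - 0.5$ and let (QP) denote the quadratic program (\ref{Eq: SS Approx QP}) with the choice $c = 1$, so that its last constraint reads $\sum_{i=1}^n x_i \le 3\sqrt n$. Its feasible set is a nonempty polytope and its description has size polynomial in $n$. As in the earlier proofs, $\alpha(G) \ne k$ because $k$ is not an integer, so exactly one of $\alpha(G) < k$ and $\alpha(G) > k$ holds, and the latter holds precisely when $G$ has a stable set of size $r$.

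The key is the dichotomy already established inside the proof of Theorem~\ref{Thm: Finding LM NP-hard}. If $\alpha(G) < k$, then $p_{A,k}$ is positive definite by Corollary~\ref{Cor: Motzkin Straus 2}, hence $q_{A,k}(x) > 0$ for every nonzero $x \ge 0$; combined with the homogeneity of $q_{A,k}$ this forces the origin to be the \emph{unique} local minimizer of (QP). If instead $\alpha(G) > k$, then (\ref{Eq: SS QP}) has no local minimizer, so no point feasible to (QP) with $\sum_i x_i < 3\sqrt n$ --- in particular not the origin --- can be a local minimizer of (QP); thus every local minimizer of (QP) satisfies $\sum_i x_i = 3\sqrt n$ and is a nonzero vector. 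Since the feasible set of (QP) is compact and nonempty, (QP) always has a (global, hence local) minimizer, so the hypothetical algorithm never reports ``no local minimizer.''

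Given all this, the reduction is immediate. Run the hypothetical algorithm on (QP); declare that $G$ has no stable set of size $r$ if it reports that (QP) has a unique local minimizer equal to the origin, and declare that $G$ has a stable set of size $r$ otherwise (i.e., if it reports ``not unique,'' or reports ``unique'' together with a minimizer different from the origin). By the dichotomy this answer is always correct, and the procedure runs in polynomial time, so $P = NP$. The one point that genuinely needs checking --- and the crux of the argument --- is that the two cases yield distinguishable outputs: when $\alpha(G) < k$ the algorithm is \emph{forced} to output the pair (unique, origin), whereas when $\alpha(G) > k$ a local minimizer provably exists yet provably cannot be the origin, so (unique, origin) cannot be the output. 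Everything else is inherited verbatim from the proof of Theorem~\ref{Thm: Finding LM NP-hard}, so no new computation is needed.
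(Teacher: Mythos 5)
Your proposal is correct and follows essentially the same reduction as the paper: run the algorithm on the quadratic program (\ref{Eq: SS Approx QP}) and distinguish the case $\alpha(G) < k$ (output must be ``unique, origin'') from $\alpha(G) > k$ (every local minimizer lies on the hyperplane, so the output is either ``not unique'' or a nonzero point). Your grouping of the paper's cases (ii) and (iii) into a single ``anything other than (unique, origin)'' outcome, and your fixing of $c=1$, are cosmetic differences only.
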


\begin{proof}
	Suppose there was such an algorithm (call it Algorithm U). We show that Algorithm U would solve the STABLESET problem in polynomial time. Let a graph $G$ on $n$ vertices with adjacency matrix $A$ and a positive integer $r \le n$ be given, and input the quadratic program (\ref{Eq: SS Approx QP}), with $k = r-0.5$, into Algorithm U. Observe from the proof of Theorem~\ref{Thm: Finding LM NP-hard} that there are three possibilities for this quadratic program: (i) the origin is the unique local minimizer, (ii) there is a unique local minimizer and it is on the hyperplane $\{ x \in \Rn | \sum_{i=1}^n x_i = \ceil{3 c^n \sqrt{n}}\}$, and (iii) there are multiple local minimizers and they are all on the hyperplane $\{ x \in \Rn | \sum_{i=1}^n x_i = \ceil{3 c^n \sqrt{n}}\}$. Case (i) indicates that $\alpha(G) < k,$ and the output of Algorithm U in this case would be the origin. Cases (ii) and (iii) both indicate that $\alpha(G) > k$. The output of Algorithm U is a point away from the origin in case (ii), and the declaration that the local minimizer is not unique in case (iii). Thus Algorithm U would reveal which case we are in, and that would allow us to decide if $G$ has a stable set of size $r$ in polynomial time.
\end{proof}

To conclude, we have established intractability of several problems related to local minima of quadratic programs. We hope our results motivate more research on identifying classes of quadratic programs where local minimizers can be found more efficiently than global minimizers. One interesting example is the case of the concave knapsack problem, where Mor{\'e} and Vavasis~\cite{more1990solution} show that a local minimizer can be found in polynomial time even though unless P=NP, a global minimizer cannot.

\bibliographystyle{abbrv}
\bibliography{QP_refs}

\end{document}